\documentclass[a4paper,12pt,reqno]{amsart}
\usepackage[utf8]{inputenc}
\usepackage{mathtools}

\makeatletter
\@namedef{subjclassname@2020}{\textup{2020} Mathematics Subject Classification}
\makeatother

\allowdisplaybreaks[1]

\usepackage{amsmath,amsthm,amssymb}
\usepackage{hyperref}
\usepackage{amsfonts}
\usepackage{amsmath}
\usepackage{mathrsfs}
\usepackage{tabto}
\usepackage{changepage} 
\usepackage{fullpage} 
\usepackage{graphicx}
\usepackage{enumitem} 

\numberwithin{equation}{section}
\newtheorem{theorem}{Theorem}[section]
\newtheorem{lemma}[theorem]{Lemma}

\newtheorem{proposition}[theorem]{Proposition}

\newtheorem{question}{Question}
\newtheorem{observation}{Observation}
\newtheorem{remark}[theorem]{Remark}
\newtheorem{example}[theorem]{Example}

\thanks{The research work of the first author is supported by Institute Research Fellowship of BIT Mesra APO/2024-25/49 and the research work of the second author is supported by ANRF(SERB) research grant TAR/2023/000197}

\begin{document}
	
	\title [Dynamical sets of semigroup]{Equality of the dynamical sets of two commuting transcendental entire functions}
	\author[M. Kumari]{Manisha Kumari}
	\address{Department of Mathematics, Birla Institute of Technology Mesra
		Ranchi--835 215, India}
	
	\email{phdam10052.24@bitmesra.ac.in}
	
	\author[D. Kumar]{Dinesh Kumar}
	\address{Department of Mathematics, Birla Institute of Technology Mesra
		Ranchi--835 215, India}
	
	\email{dineshkumar@bitmesra.ac.in }


	\keywords{  Fatou set, Julia set, normal family, Escaping set, filled Julia set, bungee set}
	
	\begin{abstract}
		
		In this paper, we investigate the dynamics of commuting transcendental entire functions \( f \) and \( g \), where \( g \) is expressed in the form \( af^p + b \) with \( a, b \in \mathbb{C} \), \( p \in \mathbb{N} \), and \( a \neq 0, 1 \). We establish that the escaping set, the filled Julia set, and the bungee set of both \( f \) and \( g \) are identical. As a direct consequence, we find that the Julia sets of \( f \) and \( g \) coincide. Our theorem extends the result established by Poon and Yang in 1998. Furthermore, building on the 2003 work of Wang and Yang, we consider a non-constant polynomial \( Q \) and permutable entire functions \( f \) and \( g \) that satisfy the relation \( Q(g) = aQ(f) + b \), where \( a \neq 0, 1 \) and \( b \in \mathbb{C} \). In this broader context, we also demonstrate that the escaping sets, the filled Julia sets, and the bungee sets of \( f \) and \( g \) are equal.
	\end{abstract}
	

	\subjclass[2020]{37F10, 30D05}
	
	\maketitle
	\section{Introduction}
Let \( f: \mathbb{C} \to \mathbb{C} \) be a transcendental entire function, and let \( f^n \) denote the \( n \)-th iterate of \( f \) for all \( n \in \mathbb{N} \). The dynamical behavior of \( f \) is characterized by partitioning the complex plane into two fundamental sets: the Fatou set and the Julia set.

The Fatou set of \( f \), denoted as \( F(f) \), is defined as follows
\[
F(f) = \{ z \in \mathbb{C} : \{ f^n \}_{n \in \mathbb{N}} \text{ is normal in some neighborhood of } z \}
\]
The complement of the Fatou set, \( J(f) = \mathbb{C} \setminus F(f) \), is known as the Julia set. Points within the Fatou set exhibit stable dynamical behavior, while the Julia set represents chaotic dynamics and sensitive dependence on initial conditions. Basic properties and structural aspects of these sets are well documented in the literature \cite{bergweiler1993iteration}, \cite{carleson2013complex}.
	
In addition to the classical decomposition of the complex plane into Fatou and Julia sets, points can be further classified based on their behavior under iteration. The escaping set of a function \( f \) is denoted by \( I(f) \) and is defined as follows $I(f) = \{ z \in \mathbb{C} : |f^{n}(z)| \to \infty \text{ as } n \to \infty \}.$ This set, introduced by Eremenko in his 1989 paper, consists of all points whose forward orbit tends to infinity. Eremenko also proved that \( I(f) \) is non-empty and that every connected component of its closure, \( \overline{I(f)} \), is unbounded.

The filled Julia set of \( f \) is denoted by \( K(f) \) and is defined as $K(f) = \{ z \in \mathbb{C} : \{ f^{n}(z) \}_{n \in \mathbb{N}} \text{ is bounded} \}.$
That is, this set contains points with bounded orbits. Furthermore, the bungee set of \( f \), denoted \( BU(f) \), is defined as the collection of points whose orbit exhibits mixed dynamical behavior. Specifically, in this case, the orbit has at least two subsequences: one that remains bounded and another that tends to infinity.
	
	%
Two functions \( f \) and \( g \) are said to be commuting (or permutable) if \( f \circ g = g \circ f \). Fatou proved that such permutable rational functions possess identical Fatou sets \cite{beardon2000iteration}. Under specific conditions, Hua and Wang \cite{hua2002dynamics} addressed a question initially posed by Baker \cite{baker1984wandering}, which asks:

\begin{question}
	For two given permutable transcendental entire functions \( f \) and \( g \), do we have \( F(f) = F(g) \)?
\end{question} 

This leads naturally to the question of whether a similar result holds for the escaping set, the filled Julia set, and the bungee set, which is explored in this article.
	
This study explores the dynamics of specific classes of transcendental entire functions and their affine modifications, motivated by recent developments in this field. We have shown that under certain conditions, two commuting entire functions \( f \) and \( g \) share the same filled Julia set, escaping set, and bungee set. Our findings extend the work of Poon and Yang \cite{poon1998dynamical} by establishing that these three sets are equal for a broader class of commuting transcendental entire functions. Additionally, we demonstrate that the Julia sets of these commuting transcendental entire functions are also identical. Our research builds on a result of Wang and Yang \cite[Theorem 1]{yang}, which investigated a non-constant polynomial \( Q \) and permutable entire functions \( f \) and \( g \) that satisfy the functional equation \( Q(g) = aQ(f) + b \), where \( a \neq 0, 1 \) and \( b \in \mathbb{C} \). In a more general framework, we prove that the escaping set, the filled Julia set, and the bungee set of \( f \) and \( g \) are all the same.

\section{The Equality of Dynamical Sets} \label{sec 3}

As demonstrated in \cite[Lemma 2.1]{poon1998dynamical}, if \( f \) and \( g \) are permutable transcendental entire functions, then \( F(f) = F(g) \), where \( g(z) = af(z) + b \) and \( a, b \in \mathbb{C} \) are constants with \( a \neq 0 \). Additionally, it is required that \( |a| = 1 \). This leads to the exploration of analogous conclusions regarding the filled Julia set, the escaping set, and the bungee set, which can be naturally examined within a more general framework.

The following observation will be referenced frequently in the subsequent sections, as it outlines how affine mappings act on both bounded and unbounded sets.

\begin{observation} \label{lem 7}
Let \( \psi(z) = az + b \) with \( a, b \in \mathbb{C} \) and \( a \neq 0 \). Then \( \psi \) sends every bounded subset of \( \mathbb{C} \) to a bounded set, and every unbounded subset to an unbounded set.
\end{observation}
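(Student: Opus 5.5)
The plan is to prove both halves directly from the definition of boundedness, using only the triangle inequality and the fact that \( \psi \) has an affine inverse. Throughout, set \( \psi(z) = az + b \) with \( a \neq 0 \).

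\textbf{Bounded sets.} Let \( S \subset \mathbb{C} \) be bounded, so that \( |z| \le M \) for every \( z \in S \). For \( z \in S \) the triangle inequality gives
\[
|\psi(z)| = |az + b| \le |a|M + |b|.
\]
Hence \( \psi(S) \) lies in the closed disc of radius \( |a|M + |b| \) about the origin, and so it is bounded.

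\textbf{Unbounded sets.} Here the key point is that \( a \neq 0 \), so \( \psi \) is invertible. Its inverse \( \psi^{-1}(w) = (w - b)/a \) is again an affine map with nonzero leading coefficient \( 1/a \). Suppose \( S \) is unbounded but \( \psi(S) \) is bounded. Applying the first half to \( \psi^{-1} \), the set \( \psi^{-1}(\psi(S)) = S \) would be bounded, which is a contradiction.

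Alternatively, one can argue directly with the reverse triangle inequality. If \( |z| \) is large, then
\[
|\psi(z)| \ge |a||z| - |b|,
\]
and the right-hand side tends to infinity as \( |z| \to \infty \). So a sequence in \( S \) tending to infinity is carried to a sequence in \( \psi(S) \) tending to infinity.

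\textbf{Main obstacle.} There is no genuine difficulty. The only point that needs care is that the hypothesis \( a \neq 0 \) is essential for the unbounded case: when \( a = 0 \), the map \( \psi \) is constant and sends every set to a single point. This is also the form in which the observation will be used later. If \( g = \psi \circ f^{p} \), then an orbit \( \{ f^n(z) \} \) is bounded or tends to infinity exactly when its affine image is.
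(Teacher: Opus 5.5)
Your proof is correct. The paper states this observation without proof, treating it as evident, and your triangle-inequality bound together with the affine inverse $\psi^{-1}(w)=(w-b)/a$ (or the reverse triangle inequality) is the routine verification it leaves implicit. One caution on your closing remark: the paper applies this to the varying maps $\psi^n$ in $g^n=\psi^n\circ f^{np}$, and there the single-map statement does not by itself give bounds uniform in $n$ or preserve $|w_n|\to\infty$; that uniformity comes from $|a|=1$, which gives $\bigl|\,|\psi^n(w)|-|w|\,\bigr|\le 2|b|/|a-1|$ for all $n$ and $w$.
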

Before proving the next result, we first establish the following lemma, which clarifies the connection between the functions \( f \), \( g \), and the affine function \( \psi(z) = az + b \) where \( a \neq 0 \) and \( b \in \mathbb{C} \).

\begin{lemma}\label{lem3}
	The functions \( f(z) \) and \( g(z) = af(z) + b \) commute if and only if \( f(z) \) and \( az + b \) commute.
\end{lemma}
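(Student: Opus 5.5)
Write \(\psi(z) = az + b\) with \(a \neq 0\), so that \(g = \psi \circ f\). The plan is to unwind both commutation relations in terms of \(\psi\) and show they are the same identity. Nothing beyond function composition and the invertibility of \(\psi\) (guaranteed by \(a \neq 0\)) is needed.

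First I compute the two compositions. Directly, \(f \circ g = f \circ \psi \circ f\), and \(g \circ f = \psi \circ f \circ f\). Hence \(f\) and \(g\) commute if and only if
\[
f\bigl(af(z) + b\bigr) = af\bigl(f(z)\bigr) + b \quad \text{for all } z \in \mathbb{C}.
\]
This says exactly that \((f \circ \psi)(w) = (\psi \circ f)(w)\) for every \(w\) in the image \(f(\mathbb{C})\). In contrast, commutation of \(f\) with \(\psi\) asks for \(f(aw + b) = af(w) + b\) for all \(w \in \mathbb{C}\).

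For the direction assuming \(f\) commutes with \(\psi\), the argument is immediate. Substituting \(w = f(z)\) gives the displayed identity, so \(f \circ g = f \circ \psi \circ f = \psi \circ f \circ f = g \circ f\).

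The converse is the only real step. From \(f \circ g = g \circ f\) we obtain only that the entire function \(h(w) = f(aw + b) - af(w) - b\) vanishes on \(f(\mathbb{C})\). Since \(f\) is non-constant and entire, \(f(\mathbb{C})\) is open and nonempty, and in fact it omits at most one point by Picard's theorem. An entire function vanishing on a nonempty open set vanishes identically by the identity theorem, so \(h \equiv 0\), which means \(f \circ \psi = \psi \circ f\).

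The main (mild) obstacle is exactly this passage from an identity valid on \(f(\mathbb{C})\) to one valid on all of \(\mathbb{C}\). It is handled by the open mapping theorem together with the identity theorem. Here one must use that \(f\) is non-constant, which holds since \(f\) is transcendental, and that \(a \neq 0\), so that \(\psi\) is an affine automorphism and \(h\) is a genuine entire function.
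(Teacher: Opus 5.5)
Your proof is correct and follows the paper's argument. The forward direction substitutes $w=f(z)$, and the converse shows that the entire function $f(aw+b)-af(w)-b$ vanishes on $f(\mathbb{C})$ and hence identically; the only variation is that you justify the last step with the open mapping theorem ($f(\mathbb{C})$ is a nonempty open set) where the paper uses Picard's theorem, which makes your version slightly more elementary and valid for any non-constant entire $f$.
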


\begin{proof}
	We have \( g(z) = af(z) + b \), and let \( \phi(z) = az + b \) with \( a \neq 0 \). We want to prove
		\[
	f \circ g = g \circ f \quad \Longleftrightarrow \quad f \circ \phi = \phi \circ f.
	\]
	
	First, assume that \( f \circ \phi = \phi \circ f \), which means $f(az + b) = a f(z) + b, \quad \forall z \in \mathbb{C}.$ Substituting \( f(z) \) in place of \( z \) gives us $	f(a f(z) + b) = a f(f(z)) + b,$ which is exactly the condition for \( f \circ g = g \circ f \). Thus, we have shown that \( f \circ \phi = \phi \circ f \) implies \( f \circ g = g \circ f \).
	Conversely, assume that \( f \circ g = g \circ f \). We need to show that:
	$
	f \circ \phi = \phi \circ f.
	$ Let \( F(z) = f(az+b) - af(z) - b \). Then \( F \) is clearly an entire function. For every \( z \), we have $F(f(z)) = f(af(z) + b) - a f(f(z)) - b = 0.$
		Therefore, \( F \) vanishes on the set \( f(\mathbb{C}) \). Since \( f \) is a transcendental entire function, by Picard’s theorem, we have
		\[
	f(\mathbb{C}) = \mathbb{C} \quad \text{or} \quad f(\mathbb{C}) = \mathbb{C} \setminus \{\beta\}
	\]
	for some \( \beta \in \mathbb{C} \).
This implies that the zero set of \( F \) includes all of \( \mathbb{C} \) except possibly for a single point. By the identity theorem, we conclude that
	\[
	F \equiv 0.
	\]
	As a result, we obtain the relation
$f(az+b) = af(z) + b,$
	which can be rewritten as
	$f \circ \phi = \phi \circ f.$
	This completes the proof.
		\end{proof}
	\begin{remark}
	More generally, \( f \) and \( g \) are two commuting transcendental entire functions, where \( g = af^p + b \) with \( a, b \in \mathbb{C} \), \( p \in \mathbb{N} \), and \( a \neq 0, 1 \), then we have $f(az + b) = af(z) + b.$ Also, $|a|=1.$
	\end{remark}
	
	We now consider the transcendental entire function $f(z) = \sin z + p(z),$ where $p(z)$ is a polynomial. We need to determine the affine maps of the form:
$\phi(z) = az + b$ that satisfy the condition:
$f \circ \phi = \phi \circ f.$ That is, we need to satisfy the following equation
	\begin{equation}
		\sin(az+b) + p(az+b) = a(\sin z + p(z)) + b.
		\tag{1}
	\end{equation}
	This leads to the conclusion that $	a = \pm 1.$\\
	\noindent
	\textbf{Case 1: \( a = 1 \).} In this scenario, we have
	\begin{equation}
		\sin(z + b) + p(z + b) = \sin z + p(z) + b.
		\tag{2}
	\end{equation}
	As \(\sin(z + b)\) involves both \(\sin z\) and \(\cos z\), for equation (2) to hold true, we must have 
	$b = 2k\pi, \quad k \in \mathbb{Z}.$
	Thus, we can derive that 
$p(z + 2k\pi) = p(z) + 2k\pi.$
	Given that \( p \) is a polynomial, the expression 
	$p(z + 2k\pi) - p(z)$  is constant only when \( p \) is linear, which leads us to 
$p(z) = z + d, \quad d \in \mathbb{C}.$ As a result, when \( a = 1 \), we arrive at 
	$f(z) = \sin z + z + d.$ Additionally, it follows that 
	\[
	f(z + 2k\pi) = f(z) + 2k\pi.
	\]
	\noindent
		\textbf{Case 2: $a=-1$.} In this case,
		\[
		\sin(-z+b)+p(-z+b)
		=
		-\sin z-p(z)+b.
		\]
		As
		$
		\sin(-z+b)=-\sin z
		$
		holds when $b=2k\pi$, we obtain
		\begin{equation}\label{eq 3}
			p(-z+2k\pi)=-p(z)+2k\pi.
			\tag{3}
		\end{equation}
		Let
		$
		z=w+k\pi.
		$ Then equation \eqref{eq 3} reduces to $p(-w+k\pi)+p(w+k\pi)=2k\pi.$
	Consequently, the function $p(z) - k\pi$ is an odd function about \( z = k\pi \). For example, when \( k = 0 \), we have $p(-z) = -p(z),$ indicating that \( p \) is an odd function, and therefore, \( f \) is also an odd function. As a result, \( -f \) commutes with \( f \).
	We will now demonstrate that $I(f)$, $K(f)$ and $BU(f)$ equal
	$I(g)$, $K(g)$ and $BU(g)$ respectively, where
	$f(z) = \sin z + p(z)$ and $g(z) = -f(z) + 2k\pi, \quad k \in \mathbb{Z},$ with the condition that	$p(-z + 2k\pi) = -p(z) + 2k\pi.$ Let $\phi(z)=-z+2k\pi.$ Then $\phi\circ f=-f+2k\pi=g.$
Also,
	\begin{align*}
		f\circ\phi
		&=f(-z+2k\pi)\\
		&=\sin(-z+2k\pi)+p(-z+2k\pi)\\
		&=-\sin z-p(z)+2k\pi\\
		&=-(\sin z+p(z))+2k\pi\\
		&=-f+2k\pi\\
		&=g.
	\end{align*}
	Hence $\phi\circ f=f\circ\phi=g.$
	Observe that $\phi^2=\mathrm{id}.$ Now
	\begin{align*}
		g^2
		&=(\phi\circ f)\circ(\phi\circ f)\\
		&=\phi\circ f\circ\phi\circ f\\
		&=f^2.
	\end{align*}
	More generally,
	$
	g^{2n}=f^{2n},
	$
	and
	$
	g^{2n+1}
	=\phi\circ f^{2n+1}.
	$ We now show that $I(f)=I(g).$
	It is a simple observation that $|z|\to\infty
	\Longleftrightarrow
	|\phi(z)|\to\infty,$
	Now, if $|f^n(z)|\to\infty$, then both $|g^{2n}(z)|$ and $|g^{2n+1}(z)|$ tend to $\infty$. Hence $	I(f)\subset I(g).$ The reverse inclusion is proved similarly, so $I(f)=I(g).$ Analogously, $K(f)=K(g).$
	Also,
	\begin{align*}
		BU(f)
		&=\mathbb C\setminus (I(f)\cup K(f))\\
		&=\mathbb C\setminus (I(g)\cup K(g))\\
		&=BU(g).
	\end{align*}
Thus, we obtain the equality of the three dynamical sets for the functions \( f \) and \( g \) in this context. This leads to a question raised by T. W. Ng. 

If \( f \) and \( g \) are transcendental entire functions satisfying either \( f \circ g = g \circ f \) or \( f \circ f = g \circ g \), can we always find two distinct points \( z_1 \) and \( z_2 \) such that \( f(z_1) = f(z_2) \) and \( g(z_1) = g(z_2) \)? In other words, do \( f \) and \( g \) identify a common pair of distinct points? For the commuting functions we have considered, the answer to this question is affirmative.

\begin{example}
	Consider \( f(z) = z + e^z \) and \( g(z) = f(z) + 2\pi i \). Here, \( f \) satisfies the property \( f(z + 2\pi i) = f(z) + 2\pi i \). It can be easily seen that \( f \circ g = g \circ f \). Moreover, if \( f(z_1) = f(z_2) \), it automatically follows that \( g(z_1) = g(z_2) \) since \( g = f + 2\pi i \). Thus, in this case, we obtain common pairs. Let us explicitly find the points \( z_1 \neq z_2 \).
	We have
	\begin{align*}
	f(z_1) = f(z_2)
	& \implies z_1 + e^{z_1} = z_2 + e^{z_2}\\
	& \implies z_2 - z_1 = e^{z_1} - e^{z_2} = e^{z_1}\!\left(1 - e^{z_2 - z_1}\right) \tag{1}
	\end{align*}
	Let $w = z_2 - z_1 \neq 0$. Then $(1)$ implies 
	\begin{align*}
		w = e^{z_1}(1 - e^w)
		& \implies e^{z_1} = \frac{w}{1 - e^w}, \quad w \notin 2\pi i\,\mathbb{Z}\\
		& \implies z_1 = \log\!\left(\frac{w}{1 - e^w}\right) + 2n\pi i, \quad n \in \mathbb{Z}, \quad z_2 = z_1 + w.
	\end{align*}
%
	\end{example}

	%
The following lemma provides an expression for the iterates of \( g \) in terms of the iterates of \( f \) and the affine map \( \phi(z) = az + b \).

\begin{lemma} \label{rem1}
	Suppose \( f \) and \( g \) are commuting transcendental entire functions, where \( g(z) = af^p(z) + b \), with \( a, b \in \mathbb{C} \), \( p \in \mathbb{N} \), and \( a \neq 0,1 \). Consider the affine map \( \phi(z) = az + b \). Then, for each \( n \in \mathbb{N} \), the iterates of \( g \) can be expressed as $g^n(z) = \phi^n \circ f^{np}(z).$
\end{lemma}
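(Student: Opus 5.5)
The plan is to first show that the affine map \( \phi(z) = az + b \) commutes with \( f \) itself, and then obtain the formula by induction on \( n \), moving copies of \( \phi \) past iterates of \( f \). The first step is the heart of the matter. It is asserted in the Remark following Lemma~\ref{lem3}, but I would prove it directly by the argument of Lemma~\ref{lem3}.

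Write \( g = \phi \circ f^p \). The hypothesis \( f \circ g = g \circ f \) reads \( f \circ \phi \circ f^p = \phi \circ f^p \circ f = \phi \circ f \circ f^p \). Define the entire function
\[
F(z) = f(az+b) - af(z) - b .
\]
Then \( F \) vanishes on \( f^p(\mathbb{C}) \). Since \( f^p \) is a non-constant (indeed transcendental) entire function, Picard's theorem gives that \( f^p(\mathbb{C}) \) omits at most one point of \( \mathbb{C} \). Hence \( F \) vanishes on a set with accumulation points, and the identity theorem gives \( F \equiv 0 \), that is, \( f \circ \phi = \phi \circ f \). I expect this to be the only step needing care; it mirrors the converse direction of Lemma~\ref{lem3} with \( f \) replaced by \( f^p \) as the inner function.

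From \( f \circ \phi = \phi \circ f \), a trivial double induction gives \( f^m \circ \phi^k = \phi^k \circ f^m \) for all \( m, k \ge 0 \). In particular, \( \phi^n \) commutes with \( f^p \).

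Now induct on \( n \). The case \( n = 1 \) is the definition \( g = \phi \circ f^p \). Assuming \( g^n = \phi^n \circ f^{np} \), we compute
\[
g^{n+1} = g \circ g^n = \phi \circ f^p \circ \phi^n \circ f^{np} = \phi \circ \phi^n \circ f^p \circ f^{np} = \phi^{n+1} \circ f^{(n+1)p},
\]
which completes the induction.
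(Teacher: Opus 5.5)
Your proof is correct and follows the same induction as the paper. The paper computes $g^2(z)=af^p(af^p(z)+b)+b=a^2f^{2p}(z)+ab+b$ and iterates, silently using the commutation of $\phi$ with $f^p$ that you establish explicitly via the identity-theorem argument of Lemma~\ref{lem3}. That analytic step is actually dispensable: since $f$ commutes with $g$, $f^p$ commutes with $g^n$, so $g^{n+1}=\phi\circ f^p\circ g^n=\phi\circ g^n\circ f^p=\phi^{n+1}\circ f^{(n+1)p}$ follows purely algebraically (and the paper's $g^2$ step is just $f^p\circ g=g\circ f^p$).
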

	\begin{proof}
	Observe that 
	\begin{align*}
	g^2(z)
	&=g(g(z))\\
	&= af^p(af^p(z)+b)+b\\
	&= a^2f^{2p}(z)+ab+b.
\end{align*}
	By induction, we can derive that
	\[
	g^n(z) = a^n f^{np}(z) + b \sum_{k=0}^{n-1} a^k.
	\]
Using the affine map \(\phi(z) = az + b\), the function \(g(z)\) can be expressed as \(g(z) = \phi \circ f^p(z)\). Consequently, since \(\phi^n(z) = a^n z + b \sum_{k=0}^{n-1} a^k\), we have \(\phi^n(f^{np}(z)) = a^n f^{np}(z) + b \sum_{k=0}^{n-1} a^k\), which is the same as \(g^n(z)\), and thus the result follows.
	\end{proof}
	For two permutable entire functions, Poon and Yang established, under certain conditions, their Fatou sets are equal, \cite{poon1998dynamical}.
	 \begin{lemma}\label{lem7} \cite[Lemma 2.1]{poon1998dynamical} Let \(f\) and \(g\) be permutable transcendental entire functions. If \(g(z) = a f(z) + b\), where \(a\) and \(b\) are constants with \(a \neq 0\), then \(F(f) = F(g)\). Furthermore, it follows that \(|a| = 1\). \end{lemma}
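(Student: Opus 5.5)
We start from Lemma~\ref{lem3}. Since $f$ and $g=af+b$ commute, $f$ commutes with the affine map $\phi(z)=az+b$, so $f\circ\phi=\phi\circ f$. Because $\phi$ commutes with $f$, we get $g=\phi\circ f$ and, by induction (the case $p=1$ of Lemma~\ref{rem1}, or directly),
\[
g^n=\phi^n\circ f^n \qquad (n\in\mathbb N).
\]
We first prove $|a|=1$ and then compare the families $\{f^n\}$ and $\{\phi^n\circ f^n\}$ on open sets.

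\emph{Step 1: $|a|=1$.} Suppose $a\neq 1$. Then $\phi$ has a unique fixed point $c=b/(1-a)$. Conjugating by the translation $z\mapsto z-c$ replaces $f$ by a transcendental entire $\tilde f$ with $\tilde f(az)=a\tilde f(z)$. If $|a|>1$, iterating gives $\tilde f(a^n z)=a^n\tilde f(z)$, hence
\[
M(|a|^n r,\tilde f)=|a|^n M(r,\tilde f).
\]
Since $M(\cdot,\tilde f)$ is increasing, this yields $M(R,\tilde f)=O(R)$ as $R\to\infty$. By Cauchy's estimates $\tilde f$ is then a polynomial of degree at most one, which contradicts transcendence. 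If $|a|<1$, we apply the same argument to $a^{-1}$, using $\tilde f(a^{-1}w)=a^{-1}\tilde f(w)$. Hence $|a|=1$ whenever $a\ne1$, and trivially $|a|=1$ when $a=1$.

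\emph{Step 2: $F(f)\subset F(g)$ when $a\neq 1$.} Here $\phi^n(w)=c+a^n(w-c)$ with $|a^n|=1$. Take $z_0\in F(f)$, a neighbourhood $U$ on which $\{f^n\}$ is normal, and any sequence $n_k$. First pass to a subsequence along which $f^{n_k}\to h$ locally uniformly, where $h$ is holomorphic or $h\equiv\infty$. Then pass to a further subsequence along which $a^{n_k}\to\alpha$ with $|\alpha|=1$.
\begin{itemize}
\item If $h$ is finite, then $g^{n_k}=c+a^{n_k}(f^{n_k}-c)\to c+\alpha(h-c)$ locally uniformly.
\item If $h\equiv\infty$, then $|g^{n_k}-c|=|f^{n_k}-c|\to\infty$ locally uniformly.
\end{itemize}
So $\{g^n\}$ is normal on $U$. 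The reverse inclusion follows symmetrically from $f^n=\phi^{-n}\circ g^n$, since $\phi^{-1}$ has the same form with $a^{-1}$, $|a^{-1}|=1$. Thus $F(f)=F(g)$.

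\emph{Step 3: the translation case $a=1$, the main obstacle.} Now $g^n=f^n+nb$, and the argument of Step~2 breaks down exactly when $f^{n_k}\to\infty$ and $n_kb\to\infty$ simultaneously, since the two could cancel. My first attempt would be to exploit the relation $f(z+b)=f(z)+b$, which says $\tau(z)=z+b$ conjugates $f$ to itself. This gives $\tau(F(f))=F(f)$ and $g^n=f^n\circ\tau^n$.

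\begin{itemize}
\item On a Fatou component $U$ of $f$ with a finite limit function, adding $n_kb$ causes no trouble after passing to subsequences (the sum tends either to a finite limit or to $\infty$).
\item On components where $f^n\to\infty$, I would use Marty's criterion. The derivatives agree, $(g^n)'=(f^n)'$, so the spherical derivatives differ only through the denominators $1+|f^n|^2$ versus $1+|f^n+nb|^2$.
\item I would control the ratio of these denominators using the standard estimate that, in an escaping Fatou component, $\log|f^n|$ grows comparably at nearby points. This should make the shift by $nb$ negligible relative to $|f^n|$ on compacta, or else force $g^n\to\infty$ there.
\end{itemize}
If that estimate proves delicate, the fallback is to invoke the known result for commuting entire functions of the form $g=f\circ\tau$ with $\tau$ a translation commuting with $f$. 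In that setting equality of Fatou sets is classical, and it would close this case.
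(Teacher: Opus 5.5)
Your Steps 1 and 2 are correct. The conjugation to $\tilde f(aw)=a\tilde f(w)$ together with $M(R,\tilde f)=O(R)$ forces $|a|=1$. For $|a|=1$, $a\neq 1$, passing to a subsequence with $a^{n_k}\to\alpha$ transfers normality in both directions. The paper gives no proof of this lemma (it is quoted from Poon and Yang), so your argument has to stand on its own.

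\textbf{The gap is the case $a=1$, $b\neq 0$.} The statement includes it, since it only requires $a\neq 0$, but you only sketch it. The mechanism you sketch is also false. Bounds on the size of $|f^n|$ cannot rule out cancellation in $f^n+nb$, and the dichotomy ``the shift $nb$ is negligible relative to $|f^n|$, or else $g^n\to\infty$'' fails. Take $f(z)=z-1-\frac{1}{2\pi}\sin 2\pi z$ and $b=1$:
\begin{itemize}
\item $f(z+1)=f(z)+1$, so $f$ commutes with $g=f+1=z-\frac{1}{2\pi}\sin 2\pi z$.
\item $g$ has a superattracting fixed point at $0$, so on a disc around $0$ we have $g^n\to 0$.
\item Hence $f^n=g^n-n\to\infty$ with $|f^n|/n\to 1$.
\end{itemize}
So the shift is exactly as large as $|f^n|$, and $g^n$ converges to a finite limit. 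The comparability of $\log|f^n|$ at nearby points holds here trivially and gives no information. Your fallback, appealing to a ``classical'' result for $g=f\circ\tau$, is just the case $a=1$ of the statement you are proving.

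\textbf{A way to close the case.} Use the standard fact that repelling periodic points are dense in the Julia set of a transcendental entire function. Let $w$ be a repelling periodic point of $f$, with $f^q(w)=w$ and $|(f^q)'(w)|=\lambda>1$. Since $g^n=f^n+nb$, we get $g^{qm}(w)=w+qmb$ and $|(g^{qm})'(w)|=|(f^{qm})'(w)|=\lambda^m$. Therefore
\[
\frac{|(g^{qm})'(w)|}{1+|g^{qm}(w)|^2}=\frac{\lambda^m}{1+|w+qmb|^2}\longrightarrow\infty \qquad (m\to\infty).
\]
By Marty's theorem, $\{g^n\}$ is not normal on any neighbourhood of $w$. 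Since such $w$ are dense in $J(f)$, this gives $J(f)\subset J(g)$. The reverse inclusion follows by the same argument with the roles exchanged, because $g(z-b)=g(z)-b$ and $f^n=g^n-nb$.
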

	  We now generalize this result to show that the filled Julia set and the escaping set of two permutable entire functions are equal. 
\begin{theorem}\label{th1}
	Suppose \( f \) and \( g \) are commuting transcendental entire functions, where \( g(z) = a f^p(z) + b \) for some \( a, b \in \mathbb{C} \), \( p \in \mathbb{N} \), and \( a \neq 0, 1 \). Then, \( K(f) = K(g) \) and \( I(f) = I(g) \).
\end{theorem}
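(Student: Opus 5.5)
The plan is to use the representation from Lemma~\ref{rem1}, namely $g^n=\phi^n\circ f^{np}$ with $\phi(z)=az+b$, together with the fact from the Remark that commutativity forces $|a|=1$. The key point is that the family of affine maps $\{\phi^n\}_{n\in\mathbb N}$ is then uniformly bi-Lipschitz with bounded translation part. Since $a\neq 1$, we can write
\[
\phi^n(z)=a^n z+b\,\frac{1-a^n}{1-a},
\]
so $|a^n|=1$ and $\bigl|b\frac{1-a^n}{1-a}\bigr|\le \frac{2|b|}{|1-a|}=:C$. Hence
\[
|z|-C\le |\phi^n(z)|\le |z|+C \quad \text{for all } n\in\mathbb N,\ z\in\mathbb C.
\]
This uniform version of Observation~\ref{lem 7} is the main tool. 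The step needing care is this uniformity in $n$: Observation~\ref{lem 7} applies to one affine map at a time, whereas here $\phi^n$ varies with $n$. This is exactly where both $|a|=1$ and $a\neq1$ are used, and I would state it as a separate preliminary inequality.

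\textbf{First inclusions.} If $z\in K(f)$, the orbit $\{f^m(z)\}$ is bounded by some $M$. Then $|g^n(z)|\le |f^{np}(z)|+C\le M+C$, so $z\in K(g)$. If $z\in I(f)$, then $|f^{np}(z)|\to\infty$ along the subsequence $np$, and therefore $|g^n(z)|\ge |f^{np}(z)|-C\to\infty$, so $z\in I(g)$.

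\textbf{Reverse inclusions.} Only the subsequence $f^{np}$ is controlled directly, so I recover the full orbit of $f$ from it. Writing $m=np+r$ with $0\le r<p$, we have $f^m(z)=f^r(f^{np}(z))$.

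Suppose $z\in K(g)$. Then $|f^{np}(z)|\le |g^n(z)|+C$ is bounded, so the points $f^{np}(z)$ lie in a compact set $D$. The finitely many entire functions $f^0,\dots,f^{p-1}$ are bounded on $D$, so the whole orbit is bounded and $z\in K(f)$.

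Suppose $z\in I(g)$. Then $|f^{np}(z)|\to\infty$, but I still need $|f^{np+r}(z)|\to\infty$ for each $0<r<p$. Here I use commutativity instead of growth estimates. For each fixed $r$, the point $w=f^r(z)$ satisfies $g^n(w)=f^r(g^n(z))$, and $w\in I(g)$ would follow if I knew $I(g)$ were forward invariant under $f$. I would avoid this issue with a cleaner route: apply the first inclusions to the pairs $(f^{\,p}, g)$ and $(g,f)$ symmetrically.

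Concretely, $|f^{np}(z)|\to\infty$ says $z\in I(f^p)$, and it is standard that $I(f^p)=I(f)$. To prove this, suppose some subsequence $f^{n_kp+r}(z)$ stays bounded by $M$. Then $f^{p-r}$ maps it into a bounded set, giving $|f^{(n_k+1)p}(z)|\le \max_{|w|\le M}|f^{p-r}(w)|$, which contradicts $|f^{np}(z)|\to\infty$. So $z\in I(f)$.

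I expect this last step to be the only non-routine point. The analogous step for $K$ is just the compactness argument above. Together these give $K(f)=K(g)$ and $I(f)=I(g)$.
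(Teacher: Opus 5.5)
Your proof is correct and takes essentially the same route as the paper. Both use the factorization $g^n=\phi^n\circ f^{np}$ from Lemma~\ref{rem1}, transfer boundedness and unboundedness through the affine maps $\phi^n$, and use continuity of $f,\dots,f^{p-1}$ on a compact set to recover the full $f$-orbit from the subsequence $f^{np}(z)$. Your version is more careful than the paper's in two places: the uniform estimate $|z|-C\le|\phi^n(z)|\le|z|+C$, which uses $|a|=1$ and $a\neq1$, supplies the uniformity in $n$ that Observation~\ref{lem 7} alone does not give, and your proof that $I(f^p)=I(f)$ fills in the step the paper dismisses as ``similarly'' for $I(g)\subset I(f)$.
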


\begin{proof}
	Suppose \( z \in K(f) \). Then, there exists some constant \( A > 0 \) such that \( |f^n(z)| \leq A \) for all \( n \in \mathbb{N} \). Using Lemma \ref{rem1}, we have \( g^n(z) = \psi^n \circ f^{np}(z) \) for all \( n \in \mathbb{N} \). 
	
	Using observation \ref{lem 7}, \( \psi^n \) maps bounded sets to bounded sets. Consequently, there exists a constant \( M > 0 \) such that \( |g^n(z)| \leq M \) for all \( n \in \mathbb{N} \), which implies that \( z \in K(g) \). Thus, we conclude that \( K(f) \subset K(g) \).\\
		%
	On the other hand, if \( z \in K(g) \), then \( f^{np}(z) = \psi^{-n}(g^n(z)) \) is bounded for all \( n \in \mathbb{N} \). Since \( f \) is continuous, it follows that \( f^k(f^{np}(z)) \) is bounded for all \( n \in \mathbb{N} \) and \( k = 1, 2, \ldots, p-1 \). Consequently, the set \( f^n(z) = \{ f^{np}(z) \} \cup \{ f(f^{np}(z)) \} \cup \dots \cup \{ f^{p-1}(f^{np}(z)) \} \) is also bounded for all \( n \in \mathbb{N} \). From this, we deduce that \( K(g) \subset K(f) \). Hence, the result.
	
	Now, suppose \( z \in I(f) \). This means that \( |f^n(z)| \to \infty \) as \( n \to \infty \). Using observation \eqref{lem 7}, we note that \( \psi \) sends unbounded sets to unbounded sets (and therefore \( \psi^n \) does as well). Consequently, it follows that \( |\psi^n(f^{np}(z))| \to \infty \) as \( n \to \infty \). Therefore, we conclude that \( z \in I(g) \), demonstrating that \( I(f) \subset I(g) \). Similarly, by following the same reasoning, we obtain \( I(g) \subset I(f) \). Thus, we arrive at the conclusion that \( I(f) = I(g) \).
	\end{proof}
	
	%
Using the complementary argument, we now demonstrate that these findings also apply to the bungee set, as described in the following remark.

\begin{remark}
	Suppose \( f \) and \( g \) are commuting transcendental entire functions, where \( g(z) = a f^p(z) + b \) for some \( a, b \in \mathbb{C} \), \( p \in \mathbb{N} \) with \( a \neq 0,1 \). Then the bungee sets of \( f \) and \( g \) are identical. Specifically, if \( z \in BU(f) \), then \( z \in \mathbb{C} \setminus (K(f) \cup I(f)) \). Consequently, \( z \in \mathbb{C} \setminus (K(g) \cup I(g)) \), which implies \( z \in BU(g) \).
\end{remark}
As a consequence of Theorem \refeq{th1}, we find that the boundaries of the sets \( I(f) \) and \( I(g) \) are equal, i.e., \( \partial I(f) = \partial I(g) \), where \( \partial U \) denotes the boundary of the set \( U \). Eremenko, in \cite{eremenko1989iteration}, established that \( \partial I(f) = J(f) \). Consequently, this leads us to the conclusion that \( J(f) = J(g) \). This provides another proof of the equality of the Julia sets of two permutable entire functions under the specified conditions, as discussed in \cite{poon1998dynamical}. Furthermore, our results are more general, as setting \( p = 1 \) yields the result found in \cite[Lemma 2.1]{poon1998dynamical}.
	We now present a family of permutable entire functions for which the Julia sets, the escaping sets, the filled Julia sets, and the bungee sets of \( f \) and \( g \) all coincide.
	
	\begin{proposition}
		Suppose \( f \) satisfies the functional equation \( f(d - z) = d - f(z) \) for some constant \( d \). If we define \( g(z) = d - f(z) \), then \( f \) and \( g \) commute.
	\end{proposition}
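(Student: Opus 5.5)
The plan is a direct computation, organized around the affine involution \(\phi(z) = d - z\). In this notation the hypothesis \(f(d-z) = d - f(z)\) reads \(f\circ\phi = \phi\circ f\), and \(g = \phi\circ f\). This is the case \(a=-1\), \(b=d\), \(p=1\) of the setting of Lemma \ref{lem3}. That lemma already says that \(f\) commuting with \(\phi(z)=az+b\) implies \(f\) commutes with \(g = af+b\). So the quickest route is to invoke the "if" direction of Lemma \ref{lem3} with \(a=-1\), \(b=d\). That direction requires no transcendence hypothesis; it only uses the substitution \(z\mapsto f(z)\).

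For completeness I would also verify it directly, in two short computations. First, compute \(f\circ g\): for every \(z\), \(f(g(z)) = f(d - f(z))\). Apply the functional equation with \(z\) replaced by \(f(z)\) to get \(f(d - f(z)) = d - f(f(z))\). Second, compute \(g\circ f\): by definition \(g(f(z)) = d - f(f(z))\). Comparing the two expressions gives \(f\circ g = g\circ f\).

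Equivalently, in operator form,
\[
f\circ g = f\circ\phi\circ f = \phi\circ f\circ f = g\circ f .
\]

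There is no real obstacle. The only point needing care is that the functional equation is applied at the point \(f(z)\) rather than \(z\), which is legitimate because it holds for all \(z\in\mathbb{C}\). I would also note, with a view to the subsequent application, that \(\phi^2=\mathrm{id}\). Hence \(g^{2n}=f^{2n}\) and \(g^{2n+1}=\phi\circ f^{2n+1}\), exactly as in the \(\sin z + p(z)\) example. So Theorem \ref{th1} (with \(a=-1\), \(p=1\)) yields \(I(f)=I(g)\), \(K(f)=K(g)\), \(BU(f)=BU(g)\) and \(J(f)=J(g)\) whenever \(f\) is transcendental.
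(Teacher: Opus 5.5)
Your proposal is correct and takes the paper's route. The paper's proof is just an appeal to the ``if'' direction of Lemma~\ref{lem3} with $\phi(z)=-z+d$, and your direct computation $f(g(z)) = f(d-f(z)) = d - f(f(z)) = g(f(z))$ unpacks that direction, correctly noting that it needs no transcendence hypothesis or Picard argument.
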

	
	\begin{proof}
		The proof follows from Lemma \eqref{lem3}, where we take \( \phi = -z + d \).
	\end{proof}
	
	Such a function \( f \) can be constructed by starting with any odd function \( G \) (i.e., \( G(-w) = -G(w) \)). We define the function \( f \) as follows
	
	\[
	f(z) = \frac{d}{2} + G\left(z - \frac{d}{2}\right).
	\]
	
	By substituting \( d - z \) in place of \( z \), we obtain
		\begin{align*}
			f(d-z)
			&=\frac{d}{2}+G\!\left(d-z-\frac{d}{2}\right)\\
			&=\frac{d}{2}+G\!\left(\frac{d}{2}-z\right)\\
			&=\frac{d}{2}-G\!\left(z-\frac{d}{2}\right)\\
			&=\frac{d}{2}-\left[\frac{d}{2}+G\!\left(z-\frac{d}{2}\right)\right]+\frac{d}{2}\\
			&=d-f(z).
		\end{align*}
	Consequently, the function \( g(z) = d - f(z) \) commutes with \( f \). Additionally, \( f \) satisfies the property \( f(az + b) = af(z) + b \). We will illustrate this with some examples.
	
	\begin{example}\label{ex1}
		Suppose \( f(z) = 1 + \sin(z - 1) \) and \( g(z) = 2 - f(z) = 1 - \sin(z - 1) \) are commuting functions. Let's introduce the affine map \( h(z) = 2 - z \), so that
		\[
		g(z) = 2 - f(z) = h(f(z)).
		\]
		Moreover,
			\begin{align*}
				f(h(z))
				&=1 + \sin(1-z)\\
				&=1-\sin(z-1)\\
				&=g(z).
			\end{align*}
			hence $f\circ h = h \circ f$. Using the commutativity of $f$ and $h$, we have
			\[
			g^n = (h\circ f)^n = h^n \circ f^n.
			\]
			Since $h^2 = \mathrm{id}$, we obtain
			\[
			g^n=
			\begin{cases}
				f^n, &  n \text{ is even},\\[4pt]
				h\circ f^n, &  n \text{ is odd}.
			\end{cases}
			\]
		Since \( h \) is conformal, it preserves the normality of families of functions. Therefore, the family \( (g^n) \) is normal if and only if the family \( (f^n) \) is also normal, which leads to the conclusion that \( F(f) = F(g) \).\\ 
		Exploiting the conformality of \( h \) once more, we can deduce that a point escapes under the iteration of \( g \) if and only if it escapes under the iteration of \( f \). Thus, we have \( I(f) = I(g) \). Similarly, a point belongs to the filled Julia set of \( g \) if and only if it belongs to the filled Julia set of \( f \), which implies \( K(f) = K(g) \).
		In the same manner, we also find that \( BU(f) = BU(g) \).
		\end{example}
		
	More generally, we can define \( f(z) = 1 + \sin(z - 1) \) and \( g(z) = -f^p(z) + 2 \) in such a way that \( f \) and \( g \) commute. By applying the same reasoning as in the previous example, we conclude that the escaping set, the filled Julia set, and the bungee set of \( f \) and \( g \) all coincide, as briefly explained in the remark below.
	
	\begin{remark}
		Let \( I(f) = \{ z \in \mathbb{C} : |f^n(z)| \to \infty \text{ as } n \to \infty \} \). If \( z \in I(f) \), then by definition, \( |f^n(z)| \to \infty \) as \( n \to \infty \). We also have \( g^n(z) = h^n \circ f^{np}(z) \), and since \( h \) is conformal, it follows that \( |g^n(z)| \to \infty \). Therefore, \( I(f) \subseteq I(g) \). By using the same argument, we obtained that \( I(g) \subseteq I(f) \). Thus, we conclude that \( I(f) = I(g) \).
	\end{remark}
		Analogously, we can demonstrate that the filled Julia set and the bungee set of two permutable entire functions are the same.
		
		\begin{example}
			Consider the functions \( f(z) = 1 + (z - 1)e^{(z - 1)^2} \) and \( g(z) = -f(z) + 2 \). It is clear that both \( f \) and \( g \) are members of \( \mathcal{S} \) (recall that the $Speiser \hspace{.2cm} class \hspace{.2cm} \mathcal{S}$ consists of those entire functions $f$ which has finite singular values). Now, let’s examine the affine map \( h(z) = 2 - z \). A straightforward computation reveals that \( f(h(z)) = h(f(z)) = g(z) \). Moreover, we can see that \( f \circ g = g \circ f \). As illustrated in example \refeq{ex1}, it can be observed that \( F(f) = F(g) \). By applying the same reasoning as in the previous example, we conclude that the escaping set, the filled Julia set, and the bungee set of both \( f \) and \( g \) are all identical.
		\end{example}
		
		The next lemma from \cite{yang} will be useful in proving the upcoming theorem.
		%
		\begin{lemma} \label{lem 1}
			\cite[Lemma 1]{yang} Let \( f \) and \( g \) be two distinct transcendental entire functions that commute, and let \( Q \) be a non-constant polynomial. Suppose that
			\[ Q(g) = aQ(f) + b \]
			for some \( a \neq 0 \) and \( b \in \mathbb{C} \). Then, for every integer \( n \geq 1 \) and every \( z \in \mathbb{C} \), we have
			\[
			Q\bigl(g^n(z)\bigr) = a^n Q\bigl(f^n(z)\bigr) + b(a^{n-1} + a^{n-2} + \dots + a + 1).
			\]
		\end{lemma}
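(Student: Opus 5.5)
The identity in Lemma \ref{lem 1} will be proved by induction on \( n \). The only inputs are the relation \( Q(g)=aQ(f)+b \), which holds pointwise on all of \( \mathbb{C} \), and the commutativity \( f\circ g=g\circ f \). Iterating commutativity gives \( f^{n}\circ g=g\circ f^{n} \) and, more generally, \( f^m\circ g^k=g^k\circ f^m \) for all \( m,k\ge 1 \), by a trivial induction.

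For \( n=1 \) the claim is exactly the hypothesis \( Q(g(z))=aQ(f(z))+b \).

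For the inductive step, assume
\[
Q\bigl(g^n(z)\bigr)=a^nQ\bigl(f^n(z)\bigr)+b\sum_{k=0}^{n-1}a^k \quad \text{for all } z\in\mathbb{C}.
\]
Evaluate this identity at the point \( g(z) \) in place of \( z \):
\[
Q\bigl(g^{n+1}(z)\bigr)=a^nQ\bigl(f^n(g(z))\bigr)+b\sum_{k=0}^{n-1}a^k .
\]
Commutativity gives \( f^n(g(z))=g(f^n(z)) \). Applying the base relation at the point \( w=f^n(z) \) yields
\[
Q\bigl(g(f^n(z))\bigr)=aQ\bigl(f^{n+1}(z)\bigr)+b .
\]
Substituting this back, we obtain
\[
Q\bigl(g^{n+1}(z)\bigr)=a^{n+1}Q\bigl(f^{n+1}(z)\bigr)+a^nb+b\sum_{k=0}^{n-1}a^k=a^{n+1}Q\bigl(f^{n+1}(z)\bigr)+b\sum_{k=0}^{n}a^k,
\]
which is the claim for \( n+1 \).

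There is no real obstacle. The one point requiring care is the order of substitution: one must substitute \( g(z) \) into the inductive hypothesis, rather than \( f(z) \), so that the inner term \( f^n\circ g \) can be swapped to \( g\circ f^n \) and then handled by the base relation. Substituting \( f(z) \) instead would produce \( Q(g^n(f(z))) \), which does not directly reduce to \( Q(g^{n+1}(z)) \). Note also that neither the hypothesis \( f\neq g \) nor transcendence is used; they matter only for the later applications of the lemma.
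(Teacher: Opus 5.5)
Your induction is correct and is essentially the paper's argument. The paper derives the recurrence \( Q(g^n(z)) = aQ\bigl(g^{n-1}(f(z))\bigr) + b \) by applying the base relation at \( g^{n-1}(z) \) and commuting \( f \) past \( g^{n-1} \), then applies the inductive hypothesis at \( f(z) \); you instead apply the hypothesis at \( g(z) \) and the base relation at \( f^n(z) \). This mirror-image route shows that your closing remark is overstated: substituting \( f(z) \) into the inductive hypothesis works fine once you first apply the base relation at \( g^{n}(z) \).
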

		
		\begin{proof}
			We provide an alternative proof of this lemma using a recurrence relation argument. Define \( A_n(z) = Q(g^n(z)) \). We begin with the relationship
			\begin{equation}\label{eq1}
				Q(g(z)) = aQ(f(z)) + b.
			\end{equation}
			Now post-multiply both sides of equation \eqref{eq1}, by \( g^{n-1} \).
			\[
			Q(g \circ g^{n-1}(z)) = aQ(f \circ g^{n-1}(z)) + b.
			\]
			Using commutativity of $f$ and $g$, we have $Q(g^n(z)) = aQ(g^{n-1} \circ f(z)) + b.$
			Thus, we can express it as $A_n(z) = aA_{n-1}(f(z)) + b.$\\
			For \( n=1 \), we find
			\[
			A_1(z) = aA_0(f(z)) + b = aQ(f(z)) + b.
			\]
			For \( n=2 \),
			\[
			A_2(z) = aA_1(f(z)) + b = a(aQ(f^2(z)) + b) + b = a^2Q(f^2(z)) + ab + b.
			\]
		Using mathematical induction, we can derive the following
		
		\[
		Q(g^n(z)) = a A_{n-1}(f(z)) + b = a^n Q(f^n(z)) + b \sum_{k=0}^{n-1} a^k.
		\]
		
		If \( a \neq 1 \), we can further simplify this expression as follows
		
		\begin{align}
			Q(g^n(z)) = a^n Q(f^n(z)) + b \frac{a^n - 1}{a - 1}.
		\end{align}
		
		This concludes the proof of the lemma.
			\end{proof}
		To prove the subsequent theorem, we will utilize the lemma above.
	
		\begin{theorem} \label{Th1}
			Let \( f \) and \( g \) be two distinct permutable transcendental entire functions, and let \( Q \) be a non-constant polynomial. Suppose \( Q(g) = aQ(f) + b \) with \( a \neq 0 \) and \( b \in \mathbb{C} \). If \( |a| = 1 \) and \( a \neq 1 \), then the following statements hold
			\begin{enumerate}
				\item \( I(f) = I(g) \),
				\item \( K(f) = K(g) \),
				\item \( BU(f) = BU(g) \).
			\end{enumerate}
		\end{theorem}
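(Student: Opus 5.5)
The plan is to reduce everything to the identity of Lemma~\ref{lem 1},
\[
Q\bigl(g^n(z)\bigr) = a^n Q\bigl(f^n(z)\bigr) + b\,\frac{a^n-1}{a-1},
\]
valid for all \(n\ge 1\) and \(z\in\mathbb{C}\) since \(a\neq 1\). Because \(|a|=1\), we have \(|a^n|=1\) and
\[
\Bigl|b\,\tfrac{a^n-1}{a-1}\Bigr| \le \tfrac{2|b|}{|a-1|} =: C,
\]
so the additive term is uniformly bounded in \(n\). The identity therefore gives
\[
\bigl|\,|Q(g^n(z))| - |Q(f^n(z))|\,\bigr| \le C \quad\text{for all } n.
\]
This is the analogue of Observation~\ref{lem 7}: the sequences \(Q(g^n(z))\) and \(Q(f^n(z))\) are simultaneously bounded, and \(|Q(g^n(z))|\to\infty\) iff \(|Q(f^n(z))|\to\infty\). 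The same holds along any subsequence \(n_k\).

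The second ingredient is the standard fact about a non-constant polynomial \(Q\) of degree \(d\ge1\): \(|Q(w)|\sim |c_d||w|^d\) as \(|w|\to\infty\). Consequently, for a sequence \(w_n\), \((w_n)\) is bounded iff \((Q(w_n))\) is bounded, and \(|w_n|\to\infty\) iff \(|Q(w_n)|\to\infty\). One direction of each equivalence is continuity of \(Q\). The other direction is properness: \(Q^{-1}\) of a bounded set is bounded, and if \(|Q(w_n)|\to\infty\) then \(w_n\) must leave every compact set.

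Combining the two ingredients with \(w_n=f^n(z)\) and \(w_n=g^n(z)\) gives the first two statements. For (2), \(z\in K(f)\) iff \((f^n(z))\) is bounded, iff \((Q(f^n(z)))\) is bounded, iff \((Q(g^n(z)))\) is bounded, iff \((g^n(z))\) is bounded, iff \(z\in K(g)\). For (1), the same chain with ``tends to \(\infty\)'' in place of ``bounded'' gives \(I(f)=I(g)\). For (3), the complementary argument of the earlier remark applies: \(BU(h)=\mathbb{C}\setminus(I(h)\cup K(h))\) for \(h=f,g\), so \(BU(f)=BU(g)\) follows from (1) and (2). Alternatively, one can argue directly with subsequences, since the comparison above holds along any subsequence.

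I expect the main point needing care to be the properness step for \(Q\), i.e.\ deducing boundedness of \(w_n\) from boundedness of \(Q(w_n)\), together with the fact that escape of \(Q(w_n)\) forces escape of \(w_n\). Both follow from the leading-term estimate \(|Q(w)|\ge \tfrac12|c_d||w|^d\) for \(|w|\) large, and I would state this explicitly. Note also that the hypothesis \(|a|=1\) is essential for the boundedness of \(a^n\) and of the geometric sum; without it the comparison between \(|Q(g^n(z))|\) and \(|Q(f^n(z))|\) fails.
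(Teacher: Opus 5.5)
Your proposal is correct and follows essentially the paper's proof: the Wang--Yang identity $Q(g^n(z))=a^nQ(f^n(z))+b\frac{a^n-1}{a-1}$, the bound $2|b|/|a-1|$ on the additive term coming from $|a|=1$, and continuity plus properness of $Q$. The only difference is that you obtain $BU(f)=BU(g)$ from $BU=\mathbb{C}\setminus(I\cup K)$, while the paper runs the subsequence argument directly (which you also mention as an alternative).

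One small slip in attribution: for the escaping-set equivalence, it is $|w_n|\to\infty\Rightarrow|Q(w_n)|\to\infty$ that needs the leading-term estimate. The converse $|Q(w_n)|\to\infty\Rightarrow|w_n|\to\infty$ is just boundedness of $Q$ on bounded sets. Your labels are swapped there, but the tools you list cover both directions.
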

		
		\begin{proof}
			We begin by showing that \( I(f) \subseteq I(g) \). Take any \( z \in I(f) \). This means that \( |f^n(z)| \to \infty \) as \( n \to \infty \). Since \( Q \) is a non-constant polynomial, it maps unbounded sets to unbounded sets. Therefore, it follows that \( |Q(f^n(z))| \to \infty \) as \( n \to \infty \). Now, by using Lemma \eqref{lem 1}, we have
			\begin{align*}
				|Q(g^n(z))| 
				&= \left|a^nQ(f^n(z)) +b\frac{a^n-1}{a-1}\right| \\
				&\geq |a^n||Q(f^n(z))|- \left|b\frac{a^n-1}{a-1}\right|\\
				&\geq |Q(f^n(z))|- 2\frac{|b|}{|a-1|},
			\end{align*}
		We have used the fact that \( |a|=1 \) and the inequality \( \left|\frac{a^n-1}{a-1}\right| \leq \frac{2}{|a-1|} \). This shows that \( |Q(g^n(z))| \to \infty \) as \( n \to \infty \), which in turn implies that \( |g^n(z)| \to \infty \) as \( n \to \infty \). Therefore, \( z \in I(g) \), leading to the conclusion that \( I(f) \subseteq I(g) \). 
		By following a similar argument, we can also show that \( I(g) \subseteq I(f) \). As a result, we conclude that \( I(f) = I(g) \).
		The equality of \( K(f) \) and \( K(g) \) can be proved using a similar approach as that of \( I(f) \) and \( I(g) \).
		We will now demonstrate the equality of \( BU(f) \) and \( BU(g) \), drawing an analogy from the equalities of \( I(f) \) with \( I(g) \) and \( K(f) \) with \( K(g) \). To do this, consider an element \( z \in BU(f) \). This implies that there exist at least two subsequences, denoted as \( \{m_k\} \) and \( \{n_k\} \), such that \( |f^{m_k}(z)| \leq R \) for all \( k \in \mathbb{N} \), while \( |f^{n_k}(z)| \to \infty \) as \( k \to \infty \).
		
		Additionally, there exists a constant \( R' > 0 \) such that \( |Q(f^{m_k}(z))| \leq R' \) for all \( k \in \mathbb{N} \), and \( |Q(f^{n_k}(z))| \to \infty \) as \( k \to \infty \). Now, 
			\begin{align*}|Q(g^{m_k}(z))|
				& = \left|a^{m_k}Q(f^{m_k}(z)) +b\frac{a^{m_k}-1}{a-1}\right| \\
				& \leq |a^{m_k}||Q(f^{m_k}(z))| + \left|b\frac{a^{m_k}-1}{a-1}\right| \\
				& \leq |Q(f^{m_k}(z))| + 2\frac{|b|}{|a-1|}.
			\end{align*} 
	
			As the sequence \(\{Q(f^{m_k})\}\) is bounded, we conclude that \(\{Q(g^{m_k}(z))\}\) is also bounded, which implies that \(\{g^{m_k}(z)\}\) is bounded as well. Also,
			\begin{align*}|Q(g^{n_k}(z))| 
			&= \left|a^{n_k}Q(f^{n_k}(z)) +b\frac{a^{n_k}-1}{a-1}\right| \\
			& \geq |a^{n_k}||Q(f^{n_k}(z))| - \left|b\frac{a^{n_k}-1}{a-1}\right| \\
			& \geq |Q(f^{n_k}(z))| - 2\frac{|b|}{|a-1|}. \end{align*}
			As \(k\) approaches infinity, \(|Q(f^{n_k}(z))|\) tends to infinity. Therefore, it follows that \(|Q(g^{n_k}(z))|\) also tends to infinity. Consequently, we find that \(|g^{n_k}(z)|\) tends to infinity as well. This leads us to conclude that \(BU(f) \subseteq BU(g)\).
			
			By applying similar reasoning, we can also establish that \(BU(g) \subseteq BU(f)\). Hence, the result.
			
		\end{proof}

		%
		
		%

	\end{document}